\numberwithin{equation}{section}
\newtheorem{theorem}{Theorem}[section]
\newtheorem{lemma}[theorem]{Lemma}
\theoremstyle{definition}
\newtheorem{remark}[theorem]{Remark}
\newtheorem{example}[theorem]{Example}
\newtheorem{pf}{{\it Proof of Theorem 1.1}\!\!}
\newcommand{\Cinf}{\ensuremath{\mathcal{C}^\infty}}
\newcommand{\mb}[1]{\ensuremath{\mathbb{#1}}}
\newcommand{\N}{\mb{N}}
\newcommand{\R}{\mb{R}}
\newcommand{\G}{\ensuremath{{\mathcal G}}}
\newcommand{\EM}{\ensuremath{{\mathcal E}_{M}}}
\newcommand{\Neg}{\mathcal{N}}
\newcommand{\singsupp}{\operatorname{sing\,supp\hspace{0.5pt}}}
\newcommand{\supp}{\mathop{\mathrm{supp}}}
\newfont{\bigmath}{cmr12 at 13pt}
\newfont{\grecomath}{cmmi12 at 15pt}
\newcommand{\beq}{\begin{equation}}
\newcommand{\eeq}{\end{equation}}
\newcommand{\eps}{\varepsilon}
\newcommand{\cC}{{\mathcal C}}
\newcommand{\cD}{{\mathcal D}}
\newcommand{\cG}{{\mathcal G}}
\newcommand{\cV}{{\mathcal V}}
\begin{document}
\title{Propagation of singularities for generalized solutions to nonlinear wave equations}
\author
{Hideo Deguchi\\
   \texttt{hdegu@sci.u-toyama.ac.jp}\\
   Department of Mathematics, University of Toyama\\
   Gofuku 3190, 930-8555 Toyama, Japan
\and
Michael Oberguggenberger\\
  \texttt{michael.oberguggenberger@uibk.ac.at}\\
Arbeitsbereich f\"{u}r Technische Mathematik,
Universit\"{a}t Innsbruck \\
Technikerstra\ss e 13, A-6020 Innsbruck, Austria
}
\date{}
\maketitle

\begin{abstract}
The paper is devoted to regularity theory of generalized solutions to semilinear wave equations with a small nonlinearity. The setting is the one of Colombeau algebras of generalized functions. It is shown that in one space dimension, an initial singularity at the origin propagates along the characteristic lines emanating from the origin, as in the linear case. The proof relies on a fixed point theorem in the ultra-metric topology on the algebras involved. The paper takes up the initiating research of the 1970s on anomalous singularities in classical solutions to semilinear hyperbolic equations and transplants the methods into the Colombeau setting.
\end{abstract}

{\bf Keywords.} Semilinear wave equations, propagation of singularities, algebras of generalized functions

{\bf AMS Subject Classifications.} Primary, 35A21, 46F30; Secondary, 35L15, 35L71

\section{Introduction}\label{sec : intro}

This paper addresses propagation of singularities for solutions to semilinear wave equations with a small nonlinearity. The equations are of the form
\begin{equation}\label{eq : nonlinear wave equation}
\begin{array}{l}
	\partial_t^2u - \partial_x^2 u = \varepsilon f(u),  \quad t \in [0,T], \ x \in \mathbb{R}, \vspace{4pt}\\
	u|_{t = 0} = u_{0},\quad \partial_tu|_{t = 0} = u_{1}, \quad x \in \mathbb{R}
\end{array}
\end{equation}
where $\varepsilon$ is small positive parameter and $f$ is smooth, polynomially bounded, and $f(0) = 0$. The initial data $u_0$ and $u_1$ are generalized functions of compact support, with a singularity at the origin. Approximating the initial data by nets of smooth functions
$(u_0^\varepsilon, u_1^\varepsilon)_{\varepsilon\in(0,1]}$, we establish the existence of a net of smooth solutions $(u^\varepsilon)_{\varepsilon\in(0,1]}$, up to an asymptotic error term of $O(\varepsilon^\infty)$. Measuring regularity in terms of estimates as $\varepsilon\downarrow 0$, we show that the initial singularity propagates only along the two characteristic lines emanating from the origin (the one-dimensional light cone), but the solution remains regular inside (and outside) the light cone.

The paper is formulated in the framework of Colombeau generalized functions. This will allow us to use the powerful tools from this theory to combine generalized function data with nonlinearities and to measure regularity.
Our main tool will be the Banach fixed point theorem in the so-called sharp topology, a complete ultra-metric topology on the Colombeau algebras. To our knowledge, this is the first time in the literature that such fixed-point arguments have been used to establish existence and regularity of solutions to nonlinear wave equations in the Colombeau framework.

Let us first put our result in perspective with regard to the classical literature. The discovery that in semilinear hyperbolic equations and systems, propagation of singularities does not necessarily occur along bicharacteristics emanating from singularities of the initial data, goes back to the paper \cite{Reed:1978} of Michael Reed, followed by the paper \cite{Rauch:1979} of Jeff Rauch. Michael Reed showed that for the nonlinear wave
equation\;\eqref{eq : nonlinear wave equation} (with $\varepsilon = 1$) in one space dimension, propagation of singularities is the same as in the linear case. That is, if classical initial data are smooth except at the origin, the solution will be smooth off the characteristic lines emanating from the origin. Jeff Rauch showed that this is not the case for space dimensions $d\geq 2$; there will be a loss of regularity inside the light cone. These two papers sparked a whole new research direction---the investigation of anomalous singularities in semilinear hyperbolic equations and systems, with a lot of activity until the 1990s (for a summary of that period, see \cite{Beals:1989}).

In order to describe the results of the paper, a few informal words about Colombeau algebras are required. Let $\Omega$ be an open subset of $\R^d$. Denote by $\mathcal{O}_M(\R)$ the space of smooth functions such that each derivative grows at most polynomially at infinity, and by $\mathcal{D}'(\Omega)$ the space of distributions on $\Omega$. Colombeau algebras are algebras of families $(u^\varepsilon)_{\varepsilon\in(0,1]}$ of smooth functions modulo asymptotically vanishing families, i.e., families all whose derivatives are of $O(\varepsilon^\infty)$ on compact sets as $\varepsilon\downarrow 0$.

A family $(u^\varepsilon)_{\varepsilon\in(0,1]}$ represents an element of the Colombeau algebra $\cG(\Omega)$, if every derivative
$\partial^\alpha u^\varepsilon$ is $O(\varepsilon^{b})$ on compact sets, for some $b\in\R$. The inclusion $\cD'(\Omega)\subset\cG(\Omega)$ holds, constructed by cut-off and convolution with a mollifier, and $\mathcal{C}^\infty(\Omega)$ is a faithful subalgebra. $\cG(\Omega)$ is invariant by superposition with maps $f\in\mathcal{O}_M(\R)$.

For the purpose of describing regularity properties of the elements of $\cG(\Omega)$, we shall single out two subalgebras. First, the subalgebra $\cG^\infty(\Omega)$ is characterized by the property that on every compact set, all derivatives $\partial^\alpha u^\varepsilon$ are $O(\varepsilon^{b})$ with the same $b$, depending only on the compact set. It holds that $\cG^\infty(\Omega) \cap \cD'(\Omega) = \cC^\infty(\Omega)$. However, $\cG^\infty(\Omega)$ is not invariant under nonlinear maps (except polynomials), hence does not deliver a general framework for regularity theory for nonlinear equations. Second, the subalgebra $\cG^0(\Omega)$ is characterized by the property that all derivatives $\partial^\alpha u^\varepsilon$ are $O(1)$ on every compact set. It also holds that
$\cG^0 \cap \cD'(\Omega) = \cC^\infty(\Omega)$. In addition, $\cG^0(\Omega)$ is invariant under superposition with arbitrary functions $f\in\cC^\infty(\R)$.

Existence of solutions to semilinear wave equations in Colombeau algebras has been known for a long time. For a globally Lipschitz nonlinearity $f\in\mathcal{O}_M(\R)$, existence and uniqueness of a solution to \eqref{eq : nonlinear wave equation} in $\cG(\R^2)$ follows, for example, from \cite{O:1987}.
In the globally Lipschitz case, existence and uniqueness of solutions in  $\cG(\R^{d+1})$ has been shown in space dimensions $d= 1,2,3$, for example, in \cite{Colombeau:1985, MORusso:1998}. For power nonlinearities, existence and uniqueness results in an $L^2$-based Colombeau algebra have been obtained in space dimensions $d\leq 9$ with suitable bounds on the polynomial growth of $f$ in \cite{NOP:2005}.

Intrinsic regularity theory in Colombeau algebras (i.e., without recourse to distributional limits), started with the introduction of $\mathcal{G}^\infty(\Omega)$ in \cite{O:1992}. It turned out that in linear partial differential equations, most of the classical regularity theory could be replicated with $\mathcal{G}^\infty(\Omega)$ in place of $\mathcal{C}^\infty(\Omega)$: elliptic regularity, hypoellipticity, microlocal elliptic regularity, wave front sets, propagation along bicharacteristics, including the techniques of pseudodifferential operators and Fourier integral operators with Colombeau amplitudes and phase functions
\cite{Garetto:2006, GarettoGO:2005, GarettoHoer:2006, GarettoHO:2009, GarettoO:2014, HOP:2006, NPS:1998, O:2008}.
In addition, the $\mathcal{G}^\infty$-singular support of solutions to linear wave equations with discontinuous coefficients could be precisely located in one space dimension and for radially symmetric solutions in higher space dimensions \cite{D:2011, DHO:2013, DO:2016, HdeH:2001}. However, as $\cG^\infty(\Omega)$ is not invariant under non-polynomial smooth maps, it could not be used for nonlinear equations.

So far, only few and special results on propagation of singularities in Colombeau solutions to nonlinear hyperbolic equations and systems have been obtained \cite{DMO:2008, O:2004, O:2006, O:2006a}. The present paper suggests that progress can be made in nonlinear hyperbolic equations when $\mathcal{G}^0(\Omega)$ is used for measuring regularity. In fact, we follow the historical path and present here Michael Reed's result in the setting of Colombeau algebras.

The main result of the paper is that in the one-dimensional semilinear wave equation, propagation of $\mathcal{G}^0$-singularities is the same as in the linear case, that is, a $\mathcal{G}^0$-singularity at the origin does not affect the $\mathcal{G}^0$-regularity inside the one-dimensional light cone. At this stage, we can prove this result only for small nonlinearities. This is dictated by the method of proof, following Michael Reed's fixed point argument, but in the ultra-metric topology on Colombeau algebras. In this ultra-metric topology, a map is a contraction if and only if it lowers growth in $\varepsilon\downarrow 0$, whence the factor $\varepsilon$ in equation\;\eqref{eq : nonlinear wave equation}.

By the way, it is clear that anomalous $\mathcal{G}^0$-singularities occur for $(3\times 3)$-hyperbolic systems in the Colombeau setting as well. One just needs to adapt the example of \cite{RauchReed:1981} or \cite{Travers:1997}. In the spirit of the result of Jeff Rauch \cite{Rauch:1979}, we also present an example where singularities spread into the interior of the light cone in three space dimensions.

The plan of the paper is as follows. In Section \ref{sec : 2} we recall the required notions from Colombeau theory. Section \ref{sec : 3} contains the main result on propagation of singularities for problem \eqref{eq : nonlinear wave equation}. It concludes with remarks on possible extension and the mentioned example of anomalous propagation of singularities.

\section{Colombeau algebras}\label{sec : 2}

We will employ the {\it special Colombeau algebra} of generalized functions denoted by $\G^{s}$ in \cite{GKOS:2001} (called the {\it simplified Colombeau algebra} in \cite{B:1990}). However, here we will simply use the letter $\G$ instead.
This section serves to recall the definitions and properties required for our purpose. For more details, see e.~g. \cite{Colombeau:1984, Colombeau:1985, GKOS:2001, NPS:1998, O:1992}.

Given a non-empty open subset $\Omega$ of $\mathbb{R}^n$, the space of real valued, infinitely differentiable functions on $\Omega$ is denoted by $\Cinf(\Omega)$, while $\Cinf(\overline{\Omega})$ refers to the subspace of functions all whose derivatives have a continuous extension up to the closure of $\Omega$.

Let $\Cinf(\Omega)^{(0,1]}$ be the differential algebra of all maps from the interval $(0,1]$ into $\Cinf(\Omega)$. Thus each element of $\Cinf(\Omega)^{(0,1]}$ is a family $(u^\varepsilon)_{\varepsilon \in (0,1]}$ of real valued smooth functions on $\Omega$. The subalgebra $\EM(\Omega)$ is defined by the elements
$(u^\varepsilon)_{\varepsilon \in (0,1]}$ of $\Cinf(\Omega)^{(0,1]}$ with the property that, for all $K \Subset \Omega$ and $\alpha \in \mathbb{N}_0^n$, there exists $b\in\R$ such that
\[
	\sup_{x \in K} |\partial^{\alpha} u^\varepsilon(x)| = O(\varepsilon^{b}) \quad {\rm as}\ \varepsilon \downarrow 0.
\]
The ideal $\Neg(\Omega)$ is defined by all elements $(u^\varepsilon)_{\varepsilon \in (0,1]}$ of $\Cinf(\Omega)^{(0,1]}$ with the property that, for all $K \Subset \Omega$, $\alpha \in\mathbb{N}_0^n$ and $a \ge 0$,
\[
	\sup_{x \in K} |\partial^{\alpha} u^\varepsilon(x)| = O(\varepsilon^a) \quad {\rm as}\ \varepsilon \downarrow 0.
\]
The {\it algebra} $\G(\Omega)$ {\it of generalized functions} is defined as the factor space
\[
	\G(\Omega) = \EM(\Omega) / \Neg(\Omega).
\]
The Colombeau algebra $\G(\overline{\Omega})$ on the closure of $\Omega$ is constructed in a similar way: the compact subsets $K$ occurring in the definition are now compact subsets of $\overline{\Omega}$, i.e., may reach up to the boundary. Since $\EM(\overline{\Omega}) \subset \EM(\Omega)$ and $\Neg(\overline{\Omega}) \subset \Neg(\Omega)$, there is a canonical map $\G(\overline{\Omega}) \to \G(\Omega)$. However, this map is not injective, as follows from the fact that $\Neg(\Omega)\cap \EM(\overline{\Omega}) \ne \Neg(\overline{\Omega})$.

\emph{Restrictions to open subsets.}
Let $\omega$ be an open subset of $\Omega$ and $U\in\cG(\Omega)$. Then the restriction $U|\omega$, obtained by restriction of representatives, is a well defined element of $\cG(\omega)$. The \emph{support} of a generalized function $U\in\cG(\Omega)$, denoted by $\supp U$, is the complement of the largest open set $\omega\subset\Omega$ such that
$U|\omega = 0$. An analogous definition applies to elements $U\in\G(\overline{\Omega})$ with the sets $\omega$, on which $U$ vanishes, taken as open in $\overline{\Omega}$.
Similarly, the restriction of elements of $U\in\cG(\Omega)$ or $U\in\G(\overline{\Omega})$ to lower dimensional linear subspaces can be defined. In our case, this will give a meaning to the initial values of elements of $\cG([0,T]\times \R)$ at $t=0$.

\emph{The ring of generalized numbers.} We let $\EM$ be the space of nets $(r^\varepsilon)_{\varepsilon\in(0.1]}$ of real numbers such that $|r^\varepsilon| = O(\varepsilon^b)$ as
$\varepsilon \downarrow 0$ for some $b\in\R$. Similarly, $\Neg$ comprises those sequences which are $O(\varepsilon^a)$ as
$\varepsilon \downarrow 0$ for every $a\geq 0$. The factor space $\widetilde{\R} = \EM/\Neg$ is the \emph{Colombeau ring of generalized numbers}.

\emph{Generalized functions of bounded type.}
Let $\Omega$ be an open subset of $\R^n$ and $L$ a subset of $\Omega$. A generalized function $U$ from $\cG(\Omega)$ is called of
\emph{bounded type on $L$}, if it has a representative $(u^\varepsilon)_{\varepsilon \in (0,1]}$ such that
\[
   \sup_{x\in L}|u^\varepsilon(x)| = O(1) \quad {\rm as}\ \varepsilon \downarrow 0.
\]
The subalgebra $\cG^0(\Omega)$ comprises the generalized functions from $\cG(\Omega)$ all whose derivatives of any order are of bounded type on compact sets.
The $\cG^0$-singular support $\singsupp_{{\mathcal G^{0}}}$ is the complement of the largest open set $\omega \subset \Omega$ such that $U|\omega$ belongs to $\cG^0(\omega)$.
The bounded type property is defined similarly for generalized functions in $\cG(\overline{\Omega})$, as is the subalgebra $\cG^0(\overline{\Omega})$.

\emph{Superposition by smooth maps of polynomial growth.}
If $f\in \mathcal{O}_M(\mathbb{R})$ and $U\in \cG(\Omega)$, then $f(U)$ is a well defined element of $\cG(\Omega)$. That is, if $(u^\varepsilon)_{\varepsilon\in(0.1]}$ is a representative of $U$, then $(f(u^\varepsilon))_{\varepsilon\in(0.1]}$ belongs to $\EM(\Omega)$ and its class in $\cG(\Omega)$ does not depend on the choice of representative of $U$. In addition, $\cG^0(\Omega)$ is invariant under superposition by arbitrary smooth maps.

\emph{Integration.} Let $K$ be a compact subset of $\Omega$ and $U\in\cG(\Omega)$. Then $\int_K U(x) dx$, again defined on representatives, is a well defined element of $\widetilde{\mathbb{R}}$. A proof of this fact can be found in \cite{Colombeau:1985}. Similarly, if $\Gamma$ is a smooth, bounded curve in $\Omega$, then
$\int_\Gamma U d\xi$, where $d\xi$ denotes the line element, is a well defined element of $\widetilde{\mathbb{R}}$, see \cite{Aragona:2012}.

\emph{The sharp topology.} The sharp topology, introduced in \cite{B:1990}, can be defined through its family of neighborhoods $V(K,p,q)$, where $K$ is a compact subset of $\Omega$, $p \in \N$ and $q \geq 0$. An element $U$ of $\G(\Omega)$
belongs to $\cV(K,p,q)$ if it has a representative $(u^\varepsilon)_{\varepsilon \in (0,1]}$ such that
\begin{equation}\label{eq:nbhd}
   \sup_{x \in K} \max_{|\alpha|\leq p} |\partial^{\alpha} u^\varepsilon(x)| = O(\varepsilon^q) \quad {\rm as}\ \varepsilon \downarrow 0.
\end{equation}
We are going to recall that the sharp topology on $\cG(\Omega)$ can be defined in terms of an ultra-metric. This construction is due to \cite{Scarpalezos:1998, Scarpalezos:2000} and has been further developed by \cite{Garetto:2005}.
Let $(K_n)_{n\in\N}$ be an exhausting sequence of compact subsets of $\Omega$. The seminorms $\mu_n$ on $\cC^\infty(\Omega)$ are given by
\begin{equation}\label{eq : seminorms}
   \mu_n(f) = \sup_{x\in K_n} \sup_{|\alpha|\leq n}|\partial^\alpha f(x)|.
\end{equation}
Valuations $\nu_n : \EM(\Omega)\to(-\infty,\infty]$ can be defined by
\[
   \nu_n\big((u^\varepsilon)_{\varepsilon \in (0,1]}\big) = \sup_{b\in\R} \{\mu_n(u^\varepsilon) = O(\eps^b) \ {\rm as}\ \varepsilon \downarrow 0\}.
\]
Obviously, $(u^\varepsilon)_{\varepsilon \in (0,1]}$ belongs to $\Neg(\Omega)$ if and only if $\nu_n\big((u^\varepsilon)_{\varepsilon \in (0,1]}\big) = \infty$ for all $n$. Thus the valuations can be extended to the factor algebra
$\cG(\Omega)$. The following properties hold:
\begin{itemize}
\item[(a)] $\nu_n(U+V) \geq \min \big(\nu_n(U),\nu_n(V)\big)$;

\item[(b)] $\nu_n(UV) \geq \nu_n(U) + \nu_n(V)$;

\item[(c)] if $m\geq n$, then $\nu_m(U) \leq \nu_n(U)$.
\end{itemize}
By means of these evaluations, a family of ultra-pseudo-seminorms on $\cG(\Omega)$ can be defined by
\[
    p_n(U) = \exp(-\nu_n(U)).
\]
They have the properties
\begin{itemize}
\item[(a)] $p_n(U+V) \leq \max \big(p_n(U),p_n(V)\big)$;

\item[(b)]  $p_n(UV) \leq p_n(U)p_n(V)$;

\item[(c)] $p_n(\lambda U) = p_n(U)$ for all $\lambda \in \R$.
\end{itemize}
Finally, an ultra-metric can be defined on $\cG(\Omega)$ by
\[
   d(U,V) = \sum_{n=0}^\infty 2^{-n-1}\min\big(p_n(U-V),1\big).
\]
It is an easy exercise to show that the topology induced on $\cG(\Omega)$ by the ultra-metric $d$ is the same as the one given by the neighborhoods \eqref{eq:nbhd}.
Further, it is known \cite{ Garetto:2005, NPS:1998, Scarpalezos:1998} that $\cG(\Omega)$ with the uniform structure induced by $d$ is complete, hence a complete ultra-metric space.

On $\cG^0(\Omega)$, the ultra-metric simplifies to
\begin{equation}\label{eq:contrationG0}
   d(U,V) = \sum_{n=0}^\infty 2^{-n-1}p_n(U-V).
\end{equation}
Indeed, if $U\in \cG^0(\Omega)$, then $\nu_n(U) \geq 0$ for all $n$, hence $p_n(U)\leq 1$ for all $n$.
In particular, $\cG^0(\Omega)$ is contained in the unit ball around zero in $\cG(\Omega)$.

\section{Propagation of singularities}\label{sec : 3}

This section is devoted to presenting and proving our main result about propagation of singularities in the generalized solution to the
Cauchy problems \eqref{eq : nonlinear wave equation}.
We interpret problem $(\ref{eq : nonlinear wave equation})$ as
\begin{equation}\label{eq : generalized nonlinear wave equation}
\begin{array}{lr}
\partial_t^2U - \partial_x^2U = Ef(U) & \mbox{in}\ \G([0,T]\times\mathbb{R}),\vspace{4pt} \\
U|_{t = 0} = U_0,\quad \partial_tU|_{t = 0} = U_1 & \mbox{in}\ \G(\mathbb{R}) \vspace{4pt} \\
\end{array}
\end{equation}
in the Colombeau algebra of generalized functions. We make the following assumptions.

\begin{itemize}
\item[(A)]
$E$ is the generalized number with $(\varepsilon)_{\varepsilon \in (0,1]}$ as a representative;
\item[(B)] $f$ belongs to $\mathcal{O}_M(\mathbb{R})$ and satisfies $f(0) = 0$;
\item[(C)] $U_0$, $U_1$ belong to $\mathcal{G}(\mathbb{R}) \cap \mathcal{G}^0(\mathbb{R}\setminus \{0\})$ and are compactly supported. Furthermore, $U_0^{\prime}$, $U_1$ are of bounded type on $\R$.
\end{itemize}

\begin{theorem}\label{thm : propagation1}
Suppose that assumptions $(A),$ $(B)$ and $(C)$ hold. Then for any $T > 0$, there exists a solution $U \in \mathcal{G}([0,T] \times \mathbb{R})$ to problem $(\ref{eq : generalized nonlinear wave equation})$ such that
\begin{equation}\label{eq : singsupp of U}
\singsupp_{{\mathcal G^{0}}} U \subset \{(t,x) : |x|=t,\ 0 \le t \le T\}.
\end{equation}
\end{theorem}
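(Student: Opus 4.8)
The plan is to solve \eqref{eq : generalized nonlinear wave equation} at the level of representatives by a fixed point argument in the sharp topology, and then to localize the argument in order to obtain the regularity statement \eqref{eq : singsupp of U}. First I would reduce to representatives: choose $(u_0^\eps), (u_1^\eps) \in \EM(\R)$ representing $U_0, U_1$, with supports contained in a fixed compact set, and with $(\p_x u_0^\eps)$, $(u_1^\eps)$ of bounded type on $\R$ by assumption (C). Using d'Alembert's formula, rewrite the Cauchy problem as the integral equation
\[
u^\eps(t,x) = w^\eps(t,x) + \frac{\eps}{2}\int_0^t\!\!\int_{x-(t-s)}^{x+(t-s)} f\bigl(u^\eps(s,y)\bigr)\,dy\,ds =: (\mathcal{T}^\eps u^\eps)(t,x),
\]
where $w^\eps$ is the solution of the homogeneous wave equation with data $(u_0^\eps, u_1^\eps)$. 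I would carry out this construction inside the strip $[0,T]\times\R$, working in the Colombeau algebra $\cG([0,T]\times\R)$ (really $\cG$ of the closed strip). The operator $\mathcal{T}$ descends to a well-defined map on $\cG([0,T]\times\R)$ because $f\in\mathcal O_M$ and integration over compact sets is well defined in $\widetilde\R$; moreover finite propagation speed keeps all supports in a fixed compact set, so only finitely many of the seminorms $\mu_n$ are relevant.

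The key analytic point is that $\mathcal T$ is a contraction in the sharp topology. For this I would estimate, for generalized functions $U, V$ supported in a fixed compact set,
\[
\nu_n\bigl(\mathcal T U - \mathcal T V\bigr) \;\geq\; 1 + \nu_{n'}(U - V)
\]
for a suitable $n'\geq n$, using property (b) of the valuations, the mean value theorem applied to $f$ (whose derivative is again in $\mathcal O_M$, hence has at-most-polynomial growth, contributing only a fixed power of $\eps$ that is absorbed), and crucially the explicit factor $\eps$ coming from $E$, which raises the valuation by $1$. In terms of the ultra-pseudo-seminorms this reads $p_n(\mathcal T U - \mathcal T V) \leq e^{-1} p_{n'}(U-V)$, so after summing the geometric-type series defining $d$ one gets $d(\mathcal T U, \mathcal T V)\leq e^{-1} d(U,V)$, a genuine contraction. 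Since $\cG$ of the (closed) strip is a complete ultra-metric space, the Banach fixed point theorem yields a unique fixed point $U\in\cG([0,T]\times\R)$, which is the desired solution of \eqref{eq : generalized nonlinear wave equation}.

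For the propagation statement \eqref{eq : singsupp of U}, I would run the same fixed point argument but now in $\cG^0$ rather than $\cG$. Fix a point $(t_0,x_0)$ with $|x_0|\neq t_0$, $0\le t_0\le T$; by finite propagation speed, the value of $U$ near $(t_0,x_0)$ depends only on the data on a spatial interval $I$ whose backward light cone avoids the origin, hence on which $U_0, U_1 \in \cG^0$. The homogeneous part $w^\eps$ is then of bounded type together with all its derivatives near $(t_0,x_0)$: for $w^\eps$ itself and $\p_t w^\eps$ this uses that $u_1^\eps$ and $\p_x u_0^\eps$ are of bounded type on all of $\R$, plus that $u_0^\eps$ is $\cG^0$ near $I$; higher derivatives follow by differentiating d'Alembert's formula, noting that each $x$- or $t$-derivative falls on the data in a way controlled by $\cG^0$-bounds on $I$. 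The integral term maps bounded-type functions to bounded-type functions (an integral over a compact region of a bounded family is bounded, and $f$ preserves $\cG^0$), and derivatives of the integral term reduce, via the fundamental theorem of calculus in the $t$-variable, to lower-order bounded quantities. Thus $\mathcal T$ maps a suitable closed ball in $\cG^0$ of the truncated cone around $(t_0,x_0)$ into itself, it is still a contraction there (the contraction estimate only used property (b) and the factor $\eps$, both still available), and the unique fixed point in $\cG$ must coincide with its unique fixed point in that $\cG^0$-ball. Hence $U$ is $\cG^0$ near $(t_0,x_0)$, which is precisely \eqref{eq : singsupp of U}.

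I expect the main obstacle to be the bookkeeping in the localized $\cG^0$-argument: one must choose the truncated light-cone neighborhoods so that (i) finite propagation speed genuinely confines the dependence to the good part of the data, (ii) the homogeneous solution $w^\eps$ and all its derivatives are of bounded type there — which is exactly why hypothesis (C) is stated as ``$U_0'$ and $U_1$ of bounded type on $\R$'' rather than merely $U_0, U_1\in\cG^0(\R\setminus\{0\})$ — and (iii) the contraction constant and the invariant ball are uniform enough across the overlapping neighborhoods to patch together into a global statement on the complement of the light cone. The analytic estimates (d'Alembert, mean value theorem, polynomial bounds on $f$ and $f'$) are routine; it is the geometry of the cones and the passage between the global $\cG$-solution and the local $\cG^0$-solutions that requires care.
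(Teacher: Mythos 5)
Your second step—deducing $\cG^0$-regularity near a point $(t_0,x_0)$ off the light cone by saying that ``by finite propagation speed, the value of $U$ near $(t_0,x_0)$ depends only on the data on a spatial interval $I$ whose backward light cone avoids the origin''—fails exactly where the theorem has content. For a point strictly \emph{inside} the light cone, $|x_0|<t_0$, the domain of dependence $[x_0-t_0,\,x_0+t_0]$ contains the origin, so the data on which $U$ depends there are \emph{not} in $\cG^0$; your localized fixed point in $\cG^0$ cannot even be set up. The argument you describe only yields regularity in the exterior region $|x|>t$ (and near points with $|x_0|>t_0$), which is the easy part. That interior regularity cannot follow from such soft locality reasoning is shown by Rauch's phenomenon and by the example at the end of this paper: in three space dimensions the same kind of data produce a solution that is $\cG^0$-singular \emph{inside} the cone, although the data are $\cG^0$ away from the origin. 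What is needed instead—and what the paper (following Reed) does—is a structural argument special to one space dimension: set $V=\partial_tU-\partial_xU$, $W=\partial_tU+\partial_xU$, rewrite the problem as integral equations along the two characteristic families, and run the fixed point in an anisotropic space $\widetilde{\G}([0,T]\times\R)$ whose elements are of bounded type, have all $D_+$-derivatives of bounded type away from $\Gamma_-$ and all $D_-$-derivatives of bounded type away from $\Gamma_+$, and vanish outside $|x|\le t+a$. The two lemmas showing that integration along characteristics and the antiderivative $\int_{-a-T}^x\frac{W-V}{2}\,dy$ preserve this space are the real work; full $\cG^0$-regularity of $U$ off $\Gamma_+\cup\Gamma_-$ is then recovered by combining the one-sided $D_\pm$ bounds with the equation $2D_+D_-U=Ef(U)$.

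A secondary but genuine problem is your contraction estimate on all of $\cG([0,T]\times\R)$: for $U,V$ merely moderate (even compactly supported), the mean value theorem gives $f(u^\eps)-f(v^\eps)=f'(\xi^\eps)(u^\eps-v^\eps)$ with $\xi^\eps$ possibly of size $\eps^{-N}$, so $f'(\xi^\eps)$ can cost an arbitrarily large negative power of $\eps$ that the single factor $\eps$ from $E$ does not absorb; $\nu_n(\mathcal{T}U-\mathcal{T}V)\ge 1+\nu_{n'}(U-V)$ is then false in general. This is precisely why the paper carries out the fixed point only on the subset where property (I) (bounded type) holds, so that $f'$ is evaluated at $O(1)$ arguments and the factor $\eps$ genuinely raises the valuation by $1$, giving the contraction constant $2\exp(-\widetilde{\nu}_0(E))<1$.
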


Before proving Theorem \ref{thm : propagation1}, we first explain some notation and definitions that we will use.
Let $D_+$ be the directional derivative in the direction $\hat{t} + \hat{x}$, where $\hat{t}$ and $\hat{x}$ are unit vectors in the $t$ and $x$ directions, and let $D_-$ be the directional derivative in the direction $\hat{t} - \hat{x}$. Put
\begin{align*}
\Gamma_+ &= \{ (t,x) : x=t,\ 0 \le t \le T\}, \\
\Gamma_- &= \{ (t,x) : x=-t,\ 0 \le t \le T\}.
\end{align*}
By assumption (C), there exists $a > 0$ such that
\begin{equation}\label{eq : supp of initial data}
\supp U_0 \cup \supp U_1 \subset [-a,a].
\end{equation}
We denote by $\widetilde{\G}([0,T] \times \mathbb{R})$ the set of all elements $V$ of $\G([0,T] \times \mathbb{R})$ with the four properties that
\begin{itemize}
\item[(I)] for all $K \Subset [0,T] \times \mathbb{R}$, the function $V$ is of bounded type on $K$;
\item[(II)] for all $K \Subset [0,T] \times \mathbb{R} \setminus \Gamma_-$ and $\alpha \in \mathbb{N}$, the function $D_+^{\alpha}V$ is of bounded type on $K$;
\item[(III)] for all $K \Subset [0,T] \times \mathbb{R} \setminus \Gamma_+$ and $\alpha \in \mathbb{N}$, the function $D_-^{\alpha}V$ is of bounded type on $K$;
\item[(IV)] $V = 0$ on $\{(t,x) \in [0,T] \times \mathbb{R} : |x| > t + a\}$, where $a$ is the constant given in $(\ref{eq : supp of initial data})$.
\end{itemize}

We remark that properties (I) and (II) imply that $V$ is $\G^0$-regular on $[0,T] \times \mathbb{R} \setminus \Gamma_-$ in the $D_+$ direction. Properties (I) and (III) mean that $V$ is $\G^0$-regular on $[0,T] \times \mathbb{R} \setminus \Gamma_+$ in the $D_-$ direction.

We now introduce an ultra-metric $\widetilde{d}$ on $\widetilde{\G}([0,T] \times \R)$. Let $(K_n^{\pm})_{n\in\N}$ and $(K_n)_{n\in\N}$ be exhausting sequences of compact subsets of $[0,T] \times \mathbb{R} \setminus \Gamma_{\mp}$ and $[0,T] \times \mathbb{R}$, respectively. Take seminorms $\widetilde{\mu}_n$ on $\cC^\infty([0,T] \times \R)$ which are defined by
\begin{align*}
   \widetilde{\mu}_n(f) & = \sup_{(t,x)\in K_n^+} \sup_{1 \leq \alpha \leq n}|D_{+}^\alpha f(t,x)| + \sup_{(t,x)\in K_n^-} \sup_{1 \leq \alpha \leq n}|D_{-}^\alpha f(t,x)| \\
   & \quad + \sup_{(t,x) \in K_n}|f(t,x)|.
\end{align*}
Then valuations $\widetilde{\nu}_n$ and ultra-pseudo-seminorms $\widetilde{p}_n$ on $\widetilde{\G}([0,T] \times \R)$ can be defined in the same way as in Section \ref{sec : 2}.
Since $\widetilde{p}_n(V) \le 1$ for any $V \in \widetilde{\G}([0,T] \times \R)$, an ultra-metric $\widetilde{d}$ can be defined on $\widetilde{\cG}([0,T] \times \R)$ by
\[
   \widetilde{d}(V,W) = \sum_{n=0}^\infty 2^{-n-1} \widetilde{p}_n(V-W).
\]
The space $\widetilde{\G}([0,T] \times \R)$ equipped with the metric $\widetilde{d}$ is complete. This can be shown similarly to the proof of \cite[Theorem 1.1]{NPS:1998}.

We next prove the following two lemmas.

\begin{lemma}\label{lemma : 1}
Assume that $C_{\pm}(t,x) = \{(\tau,x\mp(t-\tau)) : 0 \le \tau \le t\}$.
Let $d\xi$ be the line element.
Then for any $A \in \widetilde{\G}([0,T] \times \mathbb{R}),$
\[
B_{\pm} = \int_{C_{\pm}(t,x)} A(\xi)\,d\xi \quad (\in \G([0,T] \times \mathbb{R}))
\]
belong to $\widetilde{\G}([0,T] \times \mathbb{R})$.
\end{lemma}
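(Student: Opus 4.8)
The plan is to work at the level of representatives: fix a representative $(a^\varepsilon)_{\varepsilon\in(0,1]}$ of $A$ and define $b_\pm^\varepsilon(t,x)=\int_{C_\pm(t,x)}a^\varepsilon(\xi)\,d\xi$. Parametrizing $C_\pm(t,x)$ by $\tau\in[0,t]$, the line element is $d\xi=\sqrt2\,d\tau$, so $b_\pm^\varepsilon(t,x)=\sqrt2\int_0^t a^\varepsilon(\tau,x\mp(t-\tau))\,d\tau$. This is a smooth function of $(t,x)$ on $[0,T]\times\R$, and the moderateness bounds needed to conclude $(b_\pm^\varepsilon)\in\EM$, together with independence of the class $B_\pm\in\G([0,T]\times\R)$ from the choice of representative, are exactly the content already cited in the ``Integration'' paragraph of Section~\ref{sec : 2} (line integrals over smooth bounded curves). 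So the real work is to verify that $B_\pm$ has the four defining properties (I)--(IV) of $\widetilde\G([0,T]\times\R)$.

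First I would dispose of (IV) and (I). For (IV): if $|x|>t+a$ and $0\le\tau\le t$, then $|x\mp(t-\tau)|\ge|x|-(t-\tau)>t+a-(t-\tau)=\tau+a\ge a$ when we take the sign making $x\mp(t-\tau)$ have the same sign as $x$; more carefully, on the whole cone $C_\pm(t,x)$ the spatial coordinate stays outside $[-a,a]$, hence $a^\varepsilon$ vanishes there up to $\Neg$ (using property (IV) of $A$), so $b_\pm^\varepsilon=O(\varepsilon^\infty)$ there, giving $B_\pm=0$ on $\{|x|>t+a\}$. For (I): on a compact $K$, $|b_\pm^\varepsilon(t,x)|\le\sqrt2\,T\sup_{K'}|a^\varepsilon|$ where $K'$ is the (compact) union of all backward cones $C_\pm(t,x)$ with $(t,x)\in K$; since $A$ is of bounded type on $K'$ by property (I) of $A$, $B_\pm$ is of bounded type on $K$.

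The main point is (II) (and (III) by the symmetric argument). The key observation is that $D_+$ is differentiation along the lines $x-t=\text{const}$, which is the direction \emph{transverse} to the cones $C_-(t,x)$ but \emph{tangent} to the cones $C_+(t,x)$. For $B_-=\sqrt2\int_0^t a^\varepsilon(\tau,x+(t-\tau))\,d\tau$, write in null coordinates $\xi=x+t$, $\eta=x-t$ (so $D_+=\partial_\xi$ up to a constant): the inner variable runs along a line of constant... here one sees that differentiating $B_-$ in the $D_+$ direction differentiates $a^\varepsilon$ in the $D_+$ direction inside the integral \emph{plus} picks up a boundary term from the endpoint $\tau=t$, which is $a^\varepsilon$ evaluated on the line $\Gamma_+$-parallel line through $(t,x)$. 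Iterating, $D_+^\alpha B_-$ is a sum of: an integral of $D_+^\alpha a^\varepsilon$ over $C_-(t,x)$, and boundary terms involving $D_+^{j}a^\varepsilon$, $j\le\alpha-1$, evaluated at the moving endpoint. On a compact $K\Subset[0,T]\times\R\setminus\Gamma_-$, the backward $C_-$-cones from $K$ stay inside a compact subset of $[0,T]\times\R\setminus\Gamma_-$ — this is the geometric fact to check, and it is the crux: a point $(\tau,x+(t-\tau))$ lies on $\Gamma_-$ iff $x+(t-\tau)=-\tau$ iff $x+t=0$, i.e. iff $(t,x)\in\Gamma_-$, so staying away from $\Gamma_-$ in $(t,x)$ keeps the whole cone away from $\Gamma_-$. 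Hence property (II) of $A$ applies to bound the integral term, and property (I) of $A$ (bounded type everywhere on compacts) bounds the endpoint terms. This shows $D_+^\alpha B_-$ is of bounded type on $K$. For $B_+$, $D_+$ is tangent to $C_+(t,x)$; differentiating the endpoint-parametrized integral in $D_+$ again yields an integral of $D_+^\alpha a^\varepsilon$ over $C_+(t,x)$ plus endpoint terms, and now one checks that $C_+$-cones from $K\Subset[0,T]\times\R\setminus\Gamma_-$ stay in a compact subset of the same set (a point of $C_+(t,x)$ lies on $\Gamma_-$ iff ... the spatial coordinate equals minus the time coordinate, which again forces a codimension-one condition on $(t,x)$), so property (II) of $A$ again suffices. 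Symmetrically one obtains (III).

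\textbf{Expected main obstacle.} The bookkeeping of the boundary terms in $D_\pm^\alpha B_\pm$ is routine but must be organized cleanly (an induction on $\alpha$ producing a finite sum of integral and endpoint contributions, each of bounded type with an exponent depending only on $K$ and $\alpha$). The genuinely delicate point — and the one I would state as a separate geometric sublemma — is that for every compact $K\subset[0,T]\times\R\setminus\Gamma_\mp$ the union of all relevant characteristic segments $C_\pm(t,x)$, $(t,x)\in K$, together with their moving endpoints under repeated $D_\pm$-differentiation, is contained in a \emph{single} compact subset of $[0,T]\times\R\setminus\Gamma_\mp$. Once that is in hand, properties (I) and (II) (resp. (III)) of $A$ plug in directly and the bounded-type conclusion for $B_\pm$ follows with uniform-in-$\varepsilon$ constants, i.e. $B_\pm\in\widetilde\G([0,T]\times\R)$.
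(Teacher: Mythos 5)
Your treatment of properties (I) and (IV), and of the transversal case for (II) (namely $D_+^\alpha B_-$, where $C_-(t,x)$ lies on the line $x'+t'=x+t$ and therefore genuinely avoids $\Gamma_-$ once $(t,x)\notin\Gamma_-$), is sound and matches the paper. The genuine gap is in the tangential case: your argument for property (II) of $B_+$ (and, symmetrically, (III) of $B_-$) rests on a false geometric claim. You assert that for $K\Subset[0,T]\times\R\setminus\Gamma_-$ the segments $C_+(t,x)$, $(t,x)\in K$, stay in a compact subset of $[0,T]\times\R\setminus\Gamma_-$, the incidence with $\Gamma_-$ being ``a codimension-one condition on $(t,x)$''. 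It is not: a point $(\tau,x-t+\tau)$ of $C_+(t,x)$ lies on $\Gamma_-$ iff $\tau=(t-x)/2$, which belongs to $[0,t]$ precisely when $|x|\le t$; so \emph{every} $C_+$-segment issued from a point inside the light cone crosses $\Gamma_-$ (e.g.\ $C_+(1,0)$ passes through $(1/2,-1/2)\in\Gamma_-$). Hence property (II) of $A$ cannot be invoked to bound your integral term $\int_{C_+(t,x)}D_+^{\alpha}a^{\eps}\,d\xi$, and this part of your argument fails. The paper's proof avoids any integral over the segment here: since $D_+$ is tangent to $C_+(t,x)$ and $C_+(t+h/\sqrt2,x+h/\sqrt2)$ is the same line extended beyond $(t,x)$, one has exactly $D_+b_+^{\eps}=a^{\eps}$ and, iterating, $D_+^{\alpha}b_+^{\eps}=D_+^{\alpha-1}a^{\eps}$, so (II) for $B_+$ follows from properties (I)--(II) of $A$ evaluated at the point $(t,x)\in K$ itself. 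You observe the tangency but do not exploit this collapse; it is the missing idea, and it is exactly what rescues the case your geometric claim cannot cover.

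A secondary problem concerns the endpoint terms in the transversal case. If the integrand is shifted in the $D_+$ direction, the boundary contribution to $D_+b_-^{\eps}$ appears at the initial-line endpoint $(0,x+t)$, not at $\tau=t$ (placing it at $\tau=t$ forces the integrand derivative to be $2\partial_x a^{\eps}$, not $D_+a^{\eps}$). More importantly, iterating $D_+$ on such a trace term produces derivatives of $a^{\eps}$ \emph{along} $\{t=0\}$, i.e.\ mixed combinations of $D_+$ and $D_-$ derivatives; these are not controlled by property (I), which bounds only $A$ itself, so your statement that ``property (I) bounds the endpoint terms'' does not suffice for $\alpha\ge2$. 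The paper makes the structure explicit, writing $D_-^{\alpha}b_+^{\eps}=\int_{C_+(t,x)}D_-^{\alpha}a^{\eps}\,d\xi+\sum_{j=1}^{\alpha}D_-^{\alpha-j}\bigl(D_-^{j-1}a^{\eps}\bigr)(0,x-t)$ and then bounding $\partial_t^{\beta}\partial_x^{\gamma}(D_-^{\delta}a^{\eps})$ at the points $(0,x-t)$ with $x-t\ne0$; a bound of that kind on derivatives of $a^{\eps}$ restricted to the initial line away from the origin is what the endpoint terms actually require, and this is where the remaining work in your plan would have to go.
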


\begin{proof}
We will give the proof for $B_+$. The proof for $B_-$ is similar.

Let $(a^{\varepsilon})_{\varepsilon \in (0,1]}$ be a representative of $A$. Then $B_+$ has a representative
\[
b_{+}^{\varepsilon}(t,x) = \int_{C_{+}(t,x)} a^{\varepsilon}(\xi)\,d\xi.
\]
For any $K \Subset [0,T] \times \R$,
\[
\sup_{(t,x) \in K} |b_+^{\varepsilon}(t,x)| \le \sup_{(t,x) \in K} \sqrt{2}t \sup_{\xi \in C_+(t,x)} |a^{\varepsilon}(\xi)|.
\]
Hence there exists $K^{\prime} \Subset [0,T] \times \R$ $(K^{\prime} \supset K)$ such that
\[
\sup_{(t,x) \in K} |b_+^{\varepsilon}(t,x)| \le \sqrt{2}T \sup_{(t,x) \in K^{\prime}} |a^{\varepsilon}(t,x)|.
\]
This inequality and the assumption that $A$ satisfies property (I) yield that $B_+$ satisfies property (I).

A simple calculation shows that
\begin{align*}
D_+b_+^{\varepsilon}(t,x)
&= \lim_{h \downarrow 0} \dfrac{1}{h} \left[ \int_{C_+(t+h/\sqrt{2},x+h/\sqrt{2})} a^{\varepsilon}(\xi)\,d\xi - \int_{C_+(t,x)} a^{\varepsilon}(\xi)\,d\xi \right] \\
&= \lim_{h \downarrow 0} \dfrac{1}{h} \left[ \int_{(t,x)}^{(t+h/\sqrt{2},x+h/\sqrt{2})} a^{\varepsilon}(\xi)\,d\xi \right] \\
&= a^{\varepsilon}(t,x).
\end{align*}
Similarly, it holds that $D_+^{\alpha}b_+^{\varepsilon}=D_+^{\alpha-1}a^{\varepsilon}$ for any $\alpha \in \N$.
It follows from the assumption that $A$ satisfies property (II) that $B_+$ satisfies property (II).

The $D_-$-derivative of $b_+^{\varepsilon}$ is calculated as follows:
\begin{align*}
D_-b_+^{\varepsilon}(t,x)
&= \lim_{h \downarrow 0} \dfrac{1}{h} \left[ \int_{C_+(t+h/\sqrt{2},x-h/\sqrt{2})} a^{\varepsilon}(\xi)\,d\xi - \int_{C_+(t,x)} a^{\varepsilon}(\xi)\,d\xi \right] \\
&= \lim_{h \downarrow 0} \dfrac{1}{h} \left[ \int_{C_+(t,x)} (a^{\varepsilon}(\xi_1+h/\sqrt{2},\xi_2-h/\sqrt{2}) - a^{\varepsilon}(\xi_1,\xi_2))\,d\xi \right] \\
&\quad + \lim_{h \downarrow 0} \dfrac{1}{h} \int_{C_+(h/\sqrt{2},x-t-h/\sqrt{2})} a^{\varepsilon}(\xi)\,d\xi \\
&= \int_{C_+(t,x)} (D_-a^{\varepsilon})(\xi)\,d\xi + a^{\varepsilon}(0,x-t).
\end{align*}
Similarly for any $\alpha \in \N$
\begin{align*}
D_-^{\alpha} b_+^{\varepsilon}(t,x)
&= \int_{C_+(t,x)} (D_-^{\alpha}a^{\varepsilon})(\xi)\,d\xi + \sum_{j=1}^{\alpha} D_-^{\alpha-j}(D_-^{j-1}a^{\varepsilon})(0,x-t).
\end{align*}
Then for any $K \Subset [0,T] \times \R \setminus \Gamma_+$, there exists $K^{\prime} \Subset [0,T] \times \R \setminus \Gamma_+$ $(K^{\prime} \supset K)$ such that
\begin{align*}
\sup_{(t,x) \in K}|D_-^{\alpha} b_+^{\varepsilon}(t,x)|
& \le \sqrt{2}T \sup_{(t,x) \in K^{\prime}}|D_-^{\alpha}a^{\varepsilon}(t,x)| \\
& \quad + \sup_{(t,x) \in K}\sum_{j=1}^{\alpha} |D_-^{\alpha-j}(D_-^{j-1}a^{\varepsilon})(0,x-t)|.
\end{align*}
The first term on the right-hand side is uniformly bounded in $\varepsilon$, since $A$ satisfies property (III). If $(t,x) \in [0,T] \times \mathbb{R} \setminus \Gamma_+$, then $x-t \ne 0$, so for any $K \Subset [0,T] \times \mathbb{R} \setminus \Gamma_+$ and $\beta$, $\gamma$, $\delta \in \mathbb{N}_0$,
\[
\sup_{(t,x) \in K} |\partial_t^{\beta}\partial_x^{\gamma}(D_-^{\delta}a^{\varepsilon})(0,x-t)| = O(1) \quad {\rm as}\ \varepsilon \downarrow 0.
\]
This implies that for any $K \Subset [0,T] \times \mathbb{R} \setminus \Gamma_+$ and $\alpha$, $\delta \in \mathbb{N}_0$,
\[
\sup_{(t,x) \in K} |D_-^{\alpha}(D_-^{\delta}a^{\varepsilon})(0,x-t)| = O(1) \quad {\rm as}\ \varepsilon \downarrow 0.
\]
Therefore, $B_+$ satisfies property (III). It is immediate to check that $B_+$ satisfies property (IV). Thus $B_+$ belongs to $\widetilde{\G}([0,T] \times \R)$.
\end{proof}

\begin{lemma}\label{lemma : 2}
Assume that $V,$ $W$ belong to $\widetilde{\G}([0,T] \times \mathbb{R})$. If the integral
\begin{equation}\label{eq : integral}
\int_{-a-T}^{x} \dfrac{W -V}{2}\,dy \quad (\in \G([0,T] \times \mathbb{R}))
\end{equation}
vanishes on $\{(t,x) \in [0,T] \times \mathbb{R} : |x| > t + a\}),$ then it is in $\widetilde{\G}([0,T] \times \mathbb{R})$.
\end{lemma}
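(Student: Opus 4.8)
The plan is to pass to representatives and verify the four defining properties (I)--(IV) of $\widetilde{\G}([0,T]\times\R)$ separately. Fix representatives $(v^\varepsilon)_{\varepsilon\in(0,1]}$, $(w^\varepsilon)_{\varepsilon\in(0,1]}$ of $V$, $W$, put $g^\varepsilon=\tfrac12(w^\varepsilon-v^\varepsilon)$, and let $z^\varepsilon(t,x)=\int_{-a-T}^{x}g^\varepsilon(t,y)\,dy$ be the associated representative of the generalized function \eqref{eq : integral}, which I call $Z$. Property (IV) is exactly the hypothesis. Property (I) is immediate: since $V$ and $W$ satisfy (IV) the integrand vanishes for $|y|>t+a$, so the integration effectively runs over $[-(T+a),T+a]$ and $\sup_{(t,x)\in K}|z^\varepsilon(t,x)|\le 2(T+a)\sup_{K'}|g^\varepsilon|$, where $K'=(\mathrm{pr}_{t}K)\times[-(T+a),T+a]$ is a fixed compact set; this is $O(1)$ because $V,W$ satisfy (I).

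For (II) and (III) the mechanism is that $D_{+}$ and $D_{-}$ can be commuted through the integral at the price of a boundary term at the fixed endpoint. Using $D_{\pm}=\tfrac1{\sqrt2}(\partial_t\pm\partial_x)$, the relation $\partial_x z^\varepsilon=g^\varepsilon$, and $\partial_t g^\varepsilon=\sqrt2\,D_{\pm}g^\varepsilon\mp\partial_x g^\varepsilon$ together with the fundamental theorem of calculus, a direct computation gives, by induction on $\alpha$, an identity of the schematic form
\[
 D_{\pm}^{\alpha}z^\varepsilon(t,x)=\int_{-a-T}^{x}\bigl(D_{\pm}^{\alpha}g^\varepsilon\bigr)(t,y)\,dy+R_{\alpha}^{\pm,\varepsilon}(t),
\]
where $R_{\alpha}^{\pm,\varepsilon}(t)$ is a finite linear combination of derivatives of $g^\varepsilon$ of order $<\alpha$ evaluated at the fixed point $(t,-a-T)$. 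The remainder is harmless: the segment $\{(t,-a-T):0\le t\le T\}$ is a fixed compact set disjoint from $\Gamma_{+}\cup\Gamma_{-}$, and on every compact set disjoint from both characteristic lines $V$ and $W$ are $\G^{0}$-regular (all derivatives of bounded type) — this follows from (I)--(III), since simultaneous control of all $D_{+}$- and all $D_{-}$-derivatives forces control of every derivative, as one sees by localizing with a cut-off and passing to the Fourier transform. Hence $R_{\alpha}^{\pm,\varepsilon}(t)=O(1)$ uniformly in $t\in[0,T]$. The same identity holds verbatim for the alternative representative $\widetilde z^{\varepsilon}(t,x)=-\int_{x}^{t+a}g^\varepsilon(t,y)\,dy$, the remainder now sitting on the compact segment $\{(t,t+a):0\le t\le T\}$, which is again disjoint from $\Gamma_{+}\cup\Gamma_{-}$.

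The main obstacle — and the only place where the vanishing hypothesis is used — is the integral term: the horizontal path $\{t\}\times[-a-T,x]$ generically meets $\Gamma_{-}$, where $D_{+}^{\alpha}g^\varepsilon$ need not be of bounded type, so $\int_{-a-T}^{x}(D_{+}^{\alpha}g^\varepsilon)(t,y)\,dy$ cannot be estimated naively. To get around this, I would observe that evaluating $z^\varepsilon$ at a point with $x>t+a$ and using (IV) for $V,W$ shows that $\int_{\R}g^\varepsilon(t,y)\,dy$ is negligible, uniformly in $t$ — this is precisely the assumption that $Z$ vanishes on $\{|x|>t+a\}$ — so $\widetilde z^{\varepsilon}$ is an equally valid representative of $Z$. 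Given $K\Subset[0,T]\times\R\setminus\Gamma_{-}$, split $K=K^{\le}\cup K^{\ge}$ with $K^{\le}=K\cap\{x\le -t\}$, $K^{\ge}=K\cap\{x\ge -t\}$ (both compact, since $K$ misses $\Gamma_{-}$). On $K^{\le}$ use $z^\varepsilon$: for $y\le x\le -t$ the path lies in $\{x\le -t\}$ and, since $\operatorname{dist}(K,\Gamma_{-})>0$, inside a fixed compact subset of $[0,T]\times\R\setminus\Gamma_{-}$; thus $D_{+}^{\alpha}g^\varepsilon$ is of bounded type along it and the integral, over an interval of length $\le T+a$, is $O(1)$. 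On $K^{\ge}$ use $\widetilde z^{\varepsilon}$: for $y\ge x\ge -t$ the path likewise stays at positive distance from $\Gamma_{-}$, and the same estimate applies. Together with the bound on the remainders, and noting that $z^\varepsilon$ and $\widetilde z^\varepsilon$ differ by a negligible net, this shows $D_{+}^{\alpha}Z$ is of bounded type on $K$, i.e.\ (II) holds. Property (III) is obtained by the mirror-image argument, interchanging $D_{+}\leftrightarrow D_{-}$ and $\Gamma_{-}\leftrightarrow\Gamma_{+}$ and splitting $K$ at $x=t$ rather than $x=-t$. All estimates are uniform up to $t=T$, hence valid on the closed strip. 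The heart of the matter is the use of the vanishing hypothesis to switch between the two representations of $Z$, so that the integration path is always kept clear of the characteristic along which the relevant directional derivatives are uncontrolled.
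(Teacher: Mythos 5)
Your proof follows the same route as the paper's: (IV) is the hypothesis, (I) is immediate, and (II)--(III) are obtained by playing off the representation of the primitive anchored at the left endpoint against a second representation anchored on the right -- this is where the vanishing hypothesis enters -- and then splitting a compact set $K$ avoiding $\Gamma_\mp$ along that characteristic so that the horizontal integration path never meets it. This is exactly the mechanism of \eqref{eq : formula1}--\eqref{eq : formula4} and the decomposition $K=K_1\cup K_2$ in the paper.

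Two points. First, one step does not hold as written: you anchor the second representative at the \emph{moving} endpoint $t+a$ and justify that $\widetilde z^{\varepsilon}$ represents $Z$ by saying that the difference $\int_{-a-T}^{t+a}g^\varepsilon(t,y)\,dy$ is negligible ``precisely by the assumption''. That difference is the evaluation of $Z$ on the curve $x=t+a$, which lies on the \emph{boundary} of the open set $\{|x|>t+a\}$ on which the hypothesis applies; negligibility on compact subsets of that open set gives no rate on its boundary (comparing with $z^\varepsilon(t,t+a+\delta)$ and using boundedness of $g^\varepsilon$ only yields $o(1)$, not $O(\varepsilon^q)$). Nor is it easier to bound $D_+^\alpha$ of this difference directly, since its $t$-derivatives reintroduce integrals of $D_\pm g^\varepsilon$ across $\Gamma_-$ and $\Gamma_+$. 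The repair is the paper's choice of anchor: use the fixed endpoint $a+T$ (or $a+T+1$, which also avoids the corner $t=T$); then the difference is the evaluation of $Z$ along a segment contained in $\{x>t+a\}$, hence negligible, and for $(t,x)$ to the right of $\Gamma_-$ the path $[x,a+T]$ still stays clear of $\Gamma_-$ (crossing $\Gamma_+$ is harmless for $D_+$-derivatives). Second, your bound on the remainder terms at $x=-a-T$ rests on the claim that (I)--(III) force full $\G^0$-regularity on compact sets off $\Gamma_+\cup\Gamma_-$ (cut-off plus Fourier transform). That claim is in fact correct (pure directional derivatives of all orders in two independent directions control all mixed derivatives after localization), but it is heavier machinery than the lemma needs: by (IV) the integrand $\tfrac{W-V}{2}$ vanishes near the anchor segment, which is why the paper's identities \eqref{eq : formula1}--\eqref{eq : formula4} carry no boundary terms at all.
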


\begin{proof}
By assumption, the integral $(\ref{eq : integral})$ satisfies properties (IV). Since $V$, $W$ satisfy property (I), so does the integral $(\ref{eq : integral})$. Hence it remains to show that the integral $(\ref{eq : integral})$ satisfies properties (II) and (III).

By Lebesgue's dominated convergence theorem, we get
\begin{equation}\label{eq : formula1}
D_+ \int_{-a-T}^{x} \dfrac{W - V}{2}\,dy = \int_{-a-T}^{x} D_+ \dfrac{W - V}{2}\,dy.
\end{equation}
Similarly
\begin{equation}\label{eq : formula2}
D_- \int_{-a-T}^{x} \dfrac{W - V}{2}\,dy = \int_{-a-T}^{x} D_- \dfrac{W - V}{2}\,dy.
\end{equation}
On the other hand, noting that by assumption,
\[
\int_{-a-T}^{x} \dfrac{W - V}{2}\,dy = - \int_{x}^{a+T} \dfrac{W - V}{2}\,dy \quad \mbox{in} \ \G([0,T) \times \R),
\]
we have
\begin{align}
& D_+ \int_{-a-T}^{x} \dfrac{W - V}{2}\,dy = - \int_{x}^{a+T} D_+ \dfrac{W - V}{2}\,dy,
\label{eq : formula3}\\
& D_- \int_{-a-T}^{x} \dfrac{W - V}{2}\,dy = - \int_{x}^{a+T} D_- \dfrac{W - V}{2}\,dy.
\label{eq : formula4}
\end{align}
Any $K \Subset [0,T] \times \R \setminus \Gamma_-$ can be divided into two parts $K_1$, $K_2 \Subset [0,T] \times \R \setminus \Gamma_-$, where $K_1$ lies on the left of $\Gamma_-$ and $K_2$ is on the right of $\Gamma_-$. The boundedness on $K_1$ of $D_+$-derivatives of the integral $(\ref{eq : integral})$ follows from $(\ref{eq : formula1})$. The boundedness of $(\ref{eq : integral})$ on $K_2$ follows from $(\ref{eq : formula3})$. Therefore, $(\ref{eq : integral})$ satisfies property (II). Similarly, that $(\ref{eq : integral})$ satisfies property (III) follows from $(\ref{eq : formula2})$ and $(\ref{eq : formula4})$.
\end{proof}

We now turn to the proof of Theorem \ref{thm : propagation1}.

\begin{pf}

As mentioned in the introduction, we follow the ideas of Reed's article \cite{Reed:1978}, in which the phenomenon of propagation of singularities in classical solutions to problem $(\ref{eq : nonlinear wave equation})$ with $\varepsilon = 1$ has been studied.

We fix $T > 0$ arbitrarily and consider the Cauchy problem $(\ref{eq : generalized nonlinear wave equation})$. As stated above, by assumption (C), we have
\[
\supp U_0 \cup \supp U_1 \subset [-a,a]
\]
with some $a > 0$. Put $V= \partial_t U - \partial_xU$ and $W= \partial_tU + \partial_xU$. By assumption (B), we have $f(0) = 0$. Hence, by finite propagation speed, we can expect that $U$ vanishes on $\{(t,x) \in [0,T] \times \mathbb{R} : |x| > t + a\})$. If that is the case, then $U$ is expressed in the form
\[
U = \int_{-a-T}^{x} \dfrac{W-V}{2}\,dy,
\]
and so problem $(\ref{eq : generalized nonlinear wave equation})$ can be rewritten as the Cauchy problem for a first-order hyperbolic system
\begin{equation}\label{eq : system1}
\begin{array}{lr}
	(\partial_t + \partial_x)V = g_{V,W} & \mbox{in}\ \G([0,T]\times\mathbb{R}),\vspace{4pt} \\
	(\partial_t - \partial_x)W = g_{V,W} & \mbox{in}\ \G([0,T]\times\mathbb{R}),\vspace{4pt} \\
	V|_{t=0} = V_0 = U_1 - U_0^{\prime} & \mbox{in}\ \G(\mathbb{R}), \vspace{4pt} \\
      	W|_{t=0} = W_0 = U_1 + U_0^{\prime} & \mbox{in}\ \G(\mathbb{R}), \vspace{4pt} \\
\end{array}
\end{equation}
where
\[
g_{V,W} = E f\left(\int_{-a-T}^{x} \dfrac{W-V}{2}\,dy\right).
\]
We may rewrite problem $(\ref{eq : system1})$ as the pair of integral equations
\begin{equation}\label{eq : integral equation}
\begin{array}{l}
\displaystyle V(t,x) = V_0(x-t) + \int_0^t g_{V,W}(\tau,x-t+\tau)\,d\tau,\vspace{4pt} \\
\displaystyle W(t,x) = W_0(x+t) + \int_0^t g_{V,W}(\tau,x+t-\tau)\,d\tau. \vspace{4pt} \\
\end{array}
\end{equation}
Let
$\mathcal{M}([0,T] \times \R)$
denote the set
of all elements $(V,W)$ in $\widetilde{\G}([0,T] \times \R)^2$ which satisfy that
\begin{equation}\label{eq : property s}
\int_{-a-T}^{x} \dfrac{W - V}{2}\,dy = 0
\end{equation}
on $\{(t,x) \in [0,T] \times \mathbb{R} : |x| > t + a\})$.
Then $\mathcal{M}([0,T] \times \R)$ is a closed subspace of the complete metric space $\widetilde{\G}([0,T] \times \R)^2$
with the metric $d$ defined by
\[
d((V_1,W_1),(V_2,W_2)) = \widetilde{d}(V_1,V_2) + \widetilde{d}(W_1,W_2).
\]
This can be seen as follows. Take $(V,W)$ from $\widetilde{\G}([0,T] \times \R)^2 \setminus \mathcal{M}([0,T] \times \R)$. The function $(V,W)$ does not satisfy $(\ref{eq : property s})$ and so by \cite[Theorem 1.2.3]{GKOS:2001}, there are representatives $(v^{\varepsilon})_{\varepsilon \in (0,1]}$, $(w^{\varepsilon})_{\varepsilon \in (0,1]}$, a compact subset $K$ of $\{(t,x) \in [0,T] \times \mathbb{R} : |x| > t + a\})$ and a number $b \ge 0$ such that
\begin{equation}\label{eq : inequality 1}
\sup_{(t,x) \in K} \left|\int_{-a-T}^{x} \dfrac{w^{\varepsilon}(t,y) - v^{\varepsilon}(t,y)}{2}\,dy\right| \ge \varepsilon^b
\end{equation}
for sufficiently small $\varepsilon > 0$. We can choose $n \in \N$ and $C > 0$ large enough such that $K_n \supset K$ and further that for any $(z^{\varepsilon})_{\varepsilon \in (0,1]} \in \EM([0,T] \times \R)$
\begin{equation}\label{eq : inequality 2}
\sup_{(t,x) \in K} \left|\int_{-a-T}^{x} z^{\varepsilon}(t,y)\,dy\right|
\le C\sup_{(t,x) \in K_n} \left|z^{\varepsilon}(t,x)\right|
\end{equation}
for all $\varepsilon \in (0,1]$. Then the neighborhood $B((V,W);e^{-b-1}/2^{n+1})$ with center $(V,W)$ and radius $e^{-b-1}/2^{n+1}$ does not intersect $\mathcal{M}([0,T] \times \R)$. In fact, if $(\overline{V},\overline{W}) \in B((V,W);e^{-b-1}/2^{n+1})$, then $\widetilde{p}_n(\overline{V}-V) < e^{-b-1}$ and $\widetilde{p}_n(\overline{W}-W) < e^{-b-1}$.
These imply that $\widetilde{\nu}_n(\overline{V}-V) > b+1$ and $\widetilde{\nu}_n(\overline{W}-W) > b+1$, which in turn imply that $\widetilde{\mu}_n(\overline{v}^{\varepsilon}-v^{\varepsilon}) = O(\varepsilon^{b+1})$ and $\widetilde{\mu}_n(\overline{w}^{\varepsilon}-w^{\varepsilon}) = O(\varepsilon^{b+1})$, where $(\overline{v}^{\varepsilon})_{\varepsilon \in (0,1]}$, $(\overline{w}^{\varepsilon})_{\varepsilon \in (0,1]}$ are representatives of $\overline{V}$, $\overline{W}$, respectively. It follows from the definition of $\widetilde{\mu}_n$ that
\begin{align*}
   & \sup_{(t,x) \in K_n}|\overline{v}^{\varepsilon}(t,x)-v^{\varepsilon}(t,x)| = O(\varepsilon^{b+1}) \quad {\rm as}\ \varepsilon \downarrow 0, \\
& \sup_{(t,x) \in K_n}|\overline{w}^{\varepsilon}(t,x)-w^{\varepsilon}(t,x)| = O(\varepsilon^{b+1}) \quad {\rm as}\ \varepsilon \downarrow 0.
\end{align*}
Together with $(\ref{eq : inequality 1})$ and $(\ref{eq : inequality 2})$, these estimates imply that
\[
\sup_{(t,x) \in K} \left|\int_{-a-T}^{x} \dfrac{\overline{w}^{\varepsilon}(t,y) - \overline{v}^{\varepsilon}(t,y)}{2}\,dy\right| \ge \dfrac{1}{2}\varepsilon^{b}
\]
for sufficiently small $\varepsilon > 0$, that is, $(\overline{V},\overline{W})$ does not satisfy $(\ref{eq : property s})$. Hence $(\overline{V},\overline{W}) \notin \mathcal{M}([0,T] \times \R)$ and so $B((V,W);e^{-b-1}/2^{n+1}) \cap \mathcal{M}([0,T] \times \R) = \emptyset$. Thus $\mathcal{M}([0,T] \times \R)$ is a closed subspace of $\widetilde{\G}([0,T] \times \R)^2$.

For $(V,W) \in \mathcal{M}([0,T] \times \R)$, define two transformations $F_1(V,W)$, $F_2(V,W)$ by the right-hand sides of $(\ref{eq : integral equation})$, respectively, and put
\[
F(V,W) = (F_1(V,W),F_2(V,W)).
\]
The assertion of the theorem will hold if we show that $F$ is a contraction on $\mathcal{M}([0,T] \times \R)$.

In fact, if $F$ is a contraction on $\mathcal{M}([0,T] \times \R)$, then by the Banach fixed point theorem, $F$ has a fixed point $(V,W) \in \mathcal{M}([0,T] \times \R)$. We define
\[
U = \int_{-a-T}^{x} \dfrac{W-V}{2}\,dy \quad (\in \G([0,T] \times \R)).
\]
Noting that $(V,W)$ satisfies problem $(\ref{eq : system1})$, we find that $U$ is a solution of problem $(\ref{eq : generalized nonlinear wave equation})$.
That $U$ satisfies $(\ref{eq : singsupp of U})$ can be seen as follows.
Since $V$, $W$ satisfy property (I), so does $U$. Hence for any $K \Subset [0,T] \times \mathbb{R}$, the function $U$ is of bounded type on $K$.
The fact that $(V,W)$ satisfies problem $(\ref{eq : system1})$ gives the relationships
\[
D_-U = \dfrac{V}{\sqrt{2}}, \quad D_+U = \dfrac{W}{\sqrt{2}},
\]
and so for any $\alpha \in N$
\[
D_-^{\alpha}U = \dfrac{1}{\sqrt{2}}D_-^{\alpha-1} V, \quad D_+^{\alpha}U = \dfrac{1}{\sqrt{2}}D_+^{\alpha-1}W.
\]
It follows from the assumption that $V$, $W$ satisfy properties (II) and (III) that for any $K \Subset [0,T] \times \mathbb{R} \setminus (\Gamma_- \cup \Gamma_+)$ and $\alpha \in \mathbb{N}$, the functions $D_{\pm}^{\alpha} U$ are of bounded type on $K$. Since the first equation in problem $(\ref{eq : generalized nonlinear wave equation})$ can be rewritten as
\[
2D_+D_- U = E f(U),
\]
for any $K \Subset [0,T] \times \mathbb{R} \setminus (\Gamma_- \cup \Gamma_+)$ and $\alpha$, $\beta \in \mathbb{N}$, the function $D_-^{\alpha}D_+^{\beta} U$ is of bounded type on $K$. Thus we obtain that
\[
U \in \G^0([0,T] \times \R \setminus (\Gamma_- \cup \Gamma_+)),
\]
i.e., assertion $(\ref{eq : singsupp of U})$ follows.

We now prove that $F$ maps $\mathcal{M}([0,T] \times \R)$ into itself. Let $(V,W)$ be in $\mathcal{M}([0,T] \times \R)$. Then we can write
\begin{align}
F_1(V,W) & = V_0(x-t) + \int_{C_+(t,x)} g_{V,W}(\xi)\,d\xi, \label{eq : integral equation3}\\
F_2(V,W) & = W_0(x+t) + \int_{C_-(t,x)} g_{V,W}(\xi)\,d\xi.\label{eq : integral equation4}
\end{align}
We see from the definitions of $V_0$ and $W_0$ and property (C) that $V_0(x-t)$ and $W_0(x+t)$ belong to $\widetilde{\G}([0,T] \times \R)$.
We can apply Lemma $\ref{lemma : 2}$ to find that the integral
\[
\int_{-a-T}^{x} \dfrac{W-V}{2}\,dy
\]
belongs to $\widetilde{\G}([0,T] \times \R)$.
Together with assumption (B), this implies that $g_{V,W}$ is in $\widetilde{\G}([0,T] \times \R)$. Hence, by Lemma $\ref{lemma : 1}$, the two integrals in $(\ref{eq : integral equation3})$ and $(\ref{eq : integral equation4})$ are in $\widetilde{\G}([0,T] \times \R)$. Furthermore,
\begin{align*}
& \int_{-a-T}^{x} \dfrac{F_2(V,W) - F_1(V,W)}{2}\,dy \\
& \quad = \int_{-a-T}^{x} \dfrac{W_0(y+t) - V_0(y-t)}{2}\,dy \\
& \qquad + \int_{-a-T}^{x} \dfrac{\int_0^t g_{V,W}(\tau,y+t-\tau)\,d\tau - \int_0^t g_{V,W}(\tau,y-t+\tau)\,d\tau}{2}\,dy \\
& \quad = \int_{-a-T+t}^{x-t} \dfrac{W_0(y) - V_0(y)}{2}\,dy + \int_{x-t}^{x+t} \dfrac{W_0(y)}{2}\,dy - \int_{-a-T-t}^{-a-T+t} \dfrac{V_0(y)}{2}\,dy \\
& \qquad + \int_0^t \dfrac{\int_{x-t+\tau}^{x+t-\tau} g_{V,W}(\tau,z)\,dz - \int_{-a-T-t+\tau}^{-a-T+t-\tau} g_{V,W}(\tau,z)\,dz}{2}\,d\tau \\
& \quad =0
\end{align*}
on $\{(t,x) \in [0,T] \times \mathbb{R} : |x| > t + a\}$. Thus $F$ maps $\mathcal{M}([0,T] \times \R)$ into itself.

Finally we prove that $F$ is a contraction on $\mathcal{M}([0,T] \times \R)$. Let $(V_1,W_1)$, $(V_2,W_2)$ be in $\mathcal{M}([0,T] \times \R)$ and let $(v_1^{\varepsilon}, w_1^{\varepsilon})$, $(v_2^{\varepsilon}, w_2^{\varepsilon})$ be their representatives. We have to show that there is $\kappa < 1$ such that
\[
d(F(V_1,W_1),F(V_2,W_2)) \le \kappa d((V_1,W_1),(V_2,W_2)).
\]
By $(\ref{eq : integral equation3})$ and $(\ref{eq : integral equation4})$,
the mapping $F = (F_1,F_2)$ satisfies
\begin{align*}
F_1(v_1^{\varepsilon},w_1^{\varepsilon}) - F_1(v_2^{\varepsilon},w_2^{\varepsilon}) & = \int_{C_+(t,x)} (g_{v_1^{\varepsilon},w_1^{\varepsilon}}(\xi) - g_{v_2^{\varepsilon},w_2^{\varepsilon}}(\xi))\,d\xi,\\
F_2(v_1^{\varepsilon},w_1^{\varepsilon}) - F_2(v_2^{\varepsilon},w_2^{\varepsilon}) & = \int_{C_-(t,x)} (g_{v_1^{\varepsilon},w_1^{\varepsilon}}(\xi) - g_{v_2^{\varepsilon},w_2^{\varepsilon}}(\xi))\,d\xi.
\end{align*}
Similarly to the proof of Lemma \ref{lemma : 1}, the derivatives $D_{\pm}^{\alpha} (F_i(v_1^{\varepsilon},w_1^{\varepsilon}) - F_i(v_2^{\varepsilon},w_2^{\varepsilon}))$ can be calculated for $i = 1, 2$ and the following inequality holds: for any $m \in \N_0$, there exist $n \ge m$ and $C > 0$ such that
\begin{align*}
& \widetilde{\mu}_m( F_1(v_1^{\varepsilon},w_1^{\varepsilon}) - F_1(v_2^{\varepsilon},w_2^{\varepsilon})) +  \widetilde{\mu}_m( F_2(v_1^{\varepsilon},w_1^{\varepsilon}) - F_2(v_2^{\varepsilon},w_2^{\varepsilon})) \\
& \quad \le C \varepsilon \left(\widetilde{\mu}_n( v_1^{\varepsilon} - v_2^{\varepsilon}) + \widetilde{\mu}_n( w_1^{\varepsilon} - w_2^{\varepsilon})\right).
\end{align*}
From this estimate, one can pick a subsequence of the $\widetilde{\mu}_j$ such that (denoting the subsequence again by $\widetilde{\mu}_j)$
\begin{align*}
& \widetilde{\mu}_j(F_1(v_1^{\varepsilon},w_1^{\varepsilon}) - F_1(v_2^{\varepsilon},w_2^{\varepsilon})) + \widetilde{\mu}_j(F_2(v_1^{\varepsilon},w_1^{\varepsilon}) - F_2(v_2^{\varepsilon},w_2^{\varepsilon})) \\
& \quad \leq C \varepsilon \left(\widetilde{\mu}_{j+1}( v_1^{\varepsilon} - v_2^{\varepsilon}) + \widetilde{\mu}_{j+1}( w_1^{\varepsilon} - w_2^{\varepsilon})\right).
\end{align*}
This leads to
\begin{align*}
& \widetilde{p}_j(F_1(V_1,W_1) - F_1(V_2,W_2)) + \widetilde{p}_j(F_2(V_1,W_1) - F_2(V_2,W_2)) \\
& \quad \leq \exp(-\widetilde{\nu}_0(E)) \left(\widetilde{p}_{j+1}(V_1-V_2) + \widetilde{p}_{j+1}(W_1-W_2)\right).
\end{align*}
Note that any countable subfamily of $\{\widetilde{p}_n\}$ gives the same topology, since $\{\widetilde{p}_n\}$ is increasing. Hence
\[
d(F(V_1,W_1), F(V_2,W_2)) \leq 2\exp(-\widetilde{\nu}_0(E)) d((V_1,W_1),(V_2,W_2)),
\]
where the factor 2 comes from the shift $j\to j+1$. Since $\widetilde{\nu}_0(E) > \log 2$, it follows that $F$ is a contraction.
\qed
\end{pf}

\begin{remark}
(a) Condition (A) in Theorem \ref{thm : propagation1} can be weakened to the requirement that
$\widetilde{\nu}_0(E) > \log 2$.

(b) By the contraction mapping principle, the solution is unique in the space $\mathcal{M}([0,T] \times \R)$. As noted in the introduction, the solution is known to be unique in $\mathcal{G}([0,T] \times \mathbb{R})$ if $f$ is globally Lipschitz.
\end{remark}
\begin{remark}
In a similar way, one can show that if condition (C) in Theorem \ref{thm : propagation1} is replaced by
\begin{itemize}
\item[(C$^{\prime}$)] $U_0$, $U_1$ belong to $\mathcal{G}(\mathbb{R}) \cap \mathcal{G}^0(\mathbb{R}\setminus [-b,b])$ and are compactly supported. Furthermore, $U_0^{\prime}$, $U_1$ are of bounded type on $\R$,
\end{itemize}
then  for any $T > 0$, there exists a solution $U \in \mathcal{G}([0,T] \times \mathbb{R})$ to problem $(\ref{eq : generalized nonlinear wave equation})$ such that
\begin{eqnarray*}
\singsupp_{{\mathcal G^{0}}} U &\subset& \{(t,x) : t-b \le x \le t + b,\ 0 \le t \le T\}\\
   &&\qquad \cup\ \ \{(t,x) : -t-b \le x \le -t + b,\ 0 \le t \le T\}.
\end{eqnarray*}
\end{remark}
We complete this section by an example of a solution to a semilinear wave equation in three space dimension, in which the initial singularity does not propagate along the light cone, but rather contaminates the interior.
\begin{example}
Let
\begin{equation}\label{eq : repr}
   u_0^\eps(x) = \big(|x|^2 + \eps\big)^{-1/2}, \quad x\in\R^3.
\end{equation}
Clearly, $(u_0^\eps)_{\eps\in(0,1]}$ represents an element $U_0\in \mathcal{G}(\R^3) \cap \mathcal{G}^0(\R^3\setminus\{0\})$. Let $U\in\cG([0,\infty)\times\R^3)$ be represented by the same family \eqref{eq : repr}, i.e., $U$ does not depend on $t$. A simple calculation shows that $U$ solves the initial value problem
\begin{equation*}
\begin{array}{lr}
\partial_t^2U - \Delta U = 3EU^5 & \mbox{in}\ \G([0,\infty)\times\mathbb{R}^3),\vspace{4pt} \\
U|_{t = 0} = U_0,\quad \partial_tU|_{t = 0} = 0 & \mbox{in}\ \G(\mathbb{R}^3) \vspace{4pt} \\
\end{array}
\end{equation*}
and
\[
\singsupp_{{\mathcal G^{0}}} U = \{(t,x): x = 0, t\geq 0\}.
\]
\end{example}

\subsection*{Acknowledgments}
The results in this paper were obtained during several visits of the first author to Universit\"{a}t Innsbruck. He expresses his heartfelt thanks to the Unit of Engineering Mathematics for the warm hospitality during his visits.

\end{document}